\newcommand{\nc}{\newcommand}
\nc{\lan}{\big\langle}
\nc{\ran}{\big\rangle}
\nc{\kk}{{\mathsf{k}}}
\nc{\LL}{{\mathbb{L}}}
\nc{\PP}{{\mathbb{P}}}
\nc{\QQ}{{\mathbb{Q}}}
\nc{\RR}{{\mathbb{R}}}
\nc{\TT}{{\mathbb{T}}}
\nc{\ZZ}{{\mathbb{Z}}}
\nc{\CA}{{\mathcal{A}}}
\nc{\CB}{{\mathcal{B}}}
\nc{\D}{{\mathcal{D}}}
\nc{\CE}{{\mathcal{E}}}
\nc{\CK}{{\mathcal{K}}}
\nc{\CL}{{\mathcal{L}}}
\nc{\CM}{{\mathcal{M}}}
\nc{\CN}{{\mathcal{N}}}
\nc{\CO}{{\mathcal{O}}}
\nc{\CS}{{\mathcal{S}}}
\nc{\CT}{{\mathcal{T}}}
\nc{\CU}{{\mathcal{U}}}
\nc{\CX}{{\mathcal{X}}}
\nc{\BB}{{\mathbf{B}}}
\nc{\BD}{{\mathbf{D}}}
\nc{\BG}{{\mathbf{G}}}
\nc{\BO}{{\mathbf{O}}}
\nc{\BP}{{\mathbf{P}}}
\nc{\BT}{{\mathbf{T}}}
\nc{\BU}{{\mathbf{U}}}
\nc{\bm}{{\mathbf{m}}}
\nc{\bu}{{\mathbf{u}}}
\nc{\bx}{{\mathbf{x}}}
\nc{\SJ}{{\mathsf{J}}}
\nc{\TBP}{{\tilde{\BP}}}
\nc{\TD}{{\widetilde{\D}}}
\nc{\TCA}{{\tilde{\CA}}}
\nc{\TY}{{\widetilde{Y}}}
\nc{\RHom}{\mathop{\mathsf{RHom}}\nolimits}
\nc{\Hom}{\mathop{\mathsf{Hom}}\nolimits}
\nc{\Ext}{\mathop{\mathsf{Ext}}\nolimits}
\nc{\RCHom}{\mathop{\mathbf{R}\mathcal{H}\mathit{om}}\nolimits}
\nc{\CHom}{\mathop{\mathcal{H}\mathit{om}}\nolimits}
\nc{\CExt}{\mathop{\mathcal{E}\mathit{xt}}\nolimits}
\nc{\Tor}{\mathop{\mathsf{Tor}}\nolimits}
\nc{\Pic}{\mathop{\mathsf{Pic}}\nolimits}
\nc{\Br}{\mathop{\mathsf{Br}}\nolimits}
\nc{\Cone}{\mathop{\mathsf{Cone}}\nolimits}
\nc{\codim}{\mathop{\mathsf{codim}}\nolimits}
\nc{\sing}{{\mathsf{sing}}}
\nc{\perf}{{\mathsf{perf}}}
\nc{\Ker}{{\mathsf{Ker}}}
\nc{\End}{{\mathsf{End}}}
\nc{\Tr}{{\mathsf{Tr}}}
\nc{\Pf}{{\mathsf{Pf}}}
\nc{\Gr}{{\mathsf{Gr}}}
\nc{\SGr}{{\mathsf{SGr}}}
\nc{\LGr}{{\mathsf{LieGr}}}
\nc{\GTGr}{{\BG_2\mathsf{Gr}}}
\nc{\OGr}{{\mathsf{OGr}}}
\nc{\OFl}{{\mathsf{OFl}}}
\nc{\Fl}{{\mathsf{Fl}}}
\nc{\Spin}{{\mathsf{Spin}}}
\nc{\GL}{{\mathsf{GL}}}
\nc{\SL}{{\mathsf{SL}}}
\nc{\fg}{{\mathfrak{g}}}
\nc{\fsl}{{\mathfrak{sl}}}
\nc{\fso}{{\mathfrak{so}}}
\nc{\PGL}{{\mathsf{PGL}}}
\nc{\ch}{{\mathsf{ch}}}
\nc{\td}{{\mathsf{td}}}
\nc{\id}{{\mathsf{id}}}
\theoremstyle{plain}
\newtheorem{theorem}{Theorem}[section]
\newtheorem{lemma}[theorem]{Lemma}
\newtheorem{proposition}[theorem]{Proposition}
\newtheorem{corollary}[theorem]{Corollary}
\theoremstyle{definition}
\newtheorem{definition}[theorem]{Definition}
\theoremstyle{remark}
\title[A simple counterexample to the Jordan--H\"older property for derived categories]{A simple counterexample to the Jordan--H\"older property\\for derived categories}
\author{Alexander Kuznetsov}
\address{\sloppy
\parbox{0.9\textwidth}{
Algebra Section, Steklov Mathematical Institute,
8 Gubkin str., Moscow 119991 Russia
\hfill\\[5pt]
The Poncelet Laboratory, Independent University of Moscow
\hfill\\[5pt]
Laboratory of Algebraic Geometry, SU-HSE.
\hfill
}\bigskip}
\email{akuznet@mi.ras.ru}
\date{}
\thanks{I was partially supported by
RFFI grants 11-01-00393, 11-01-00568, 12-01-33024, NSh-5139.2012.1,
the grant of the Simons foundation, and by AG Laboratory SU-HSE, RF government grant, ag.11.G34.31.0023.}
\begin{document}

\begin{abstract}
A counterexample to the Jordan--H\"older property for semiorthogonal decompositions
of derived categories of smooth projective varieties was constructed by B\"ohning, Graf von Bothmer and Sosna.
In this short note we present a simpler example by realizing Bondal's quiver
in the derived category of a blowup of $\PP^3$.
\end{abstract}

\maketitle

\section{Introduction}

Given a triangulated category $\CT$, a {\sf semiorthogonal decomposition for $\CT$} is a chain
$\CA_1,\dots,\CA_m \subset \CT$ of full triangulated subcategories such that 
\begin{itemize}
\item for any $j > i$ one has $\Hom(\CA_j,\CA_i) = 0$, and
\item for any object $T \in \CT$ there is a chain of morphisms
\begin{equation*}
0 = T_m \to T_{m-1} \to \dots \to T_1 \to T_0 = T
\end{equation*}
such that $\Cone(T_i \to T_{i-1}) \in \CA_i$.
\end{itemize}
We write $\CT = \langle \CA_1,\dots,\CA_m \rangle$ to denote a semiorthogonal decomposition.

It is well known \cite{BK} that the braid group acts on the set of all semiorthogonal decompositions of a given category ---
the $i$-th generator of the braid group acts as
\begin{equation*}
\langle \CA_1,\dots,\CA_r \rangle \mapsto
\langle \CA_1,\dots,\CA_{i-1},\CA_{i+1},{}^\perp \langle \CA_1,\dots,\CA_{i-1},\CA_{i+1} \rangle \cap \langle \CA_{i+2},\dots,\CA_m\rangle^\perp,\CA_{i+2},\dots,\CA_m \rangle.
\end{equation*}
So if a category $\CT$ has a semiorthogonal decomposition, it has many of them. However, 
it is also well known that the equivalence classes of the components do not change under this action ---
there is an equivalence of categories
\begin{equation*}
{}^\perp \langle \CA_1,\dots,\CA_{i-1},\CA_{i+1} \rangle \cap \langle \CA_{i+2},\dots,\CA_r\rangle^\perp \cong \CA_i
\end{equation*}
This motivates the following definition.

\begin{definition}
A triangulated category $\CT$ {\sf has the Jordan--H\"older property} if for any pair
\begin{equation*}
\CT = \langle \CA_1,\dots,\CA_m \rangle,
\qquad
\CT = \langle \CB_1,\dots,\CB_n \rangle
\end{equation*}
of semiorthogonal decompositions with indecomposable components one has $m = n$ and there is a permutation $\sigma \in S_m$
such that $\CB_i \cong \CA_{\sigma(i)}$ for each $1 \le i \le m$.
\end{definition}

Among triangulated categories of geometrical nature there are very few for which the Jordan-H\"older property has been proved.
Basically these consist of those for which all semiorthogonal decompositions can be classified
(basically these are $\BD(\PP^1)$ and its quotient stacks $\BD(\PP^1/\Gamma)$, see~\cite{Kir}
for the detailed investigation of the latter), or those
which are themselves indecomposable (connected Calabi--Yau categories~\cite{Bri}, derived categories 
of curves of positive genus~\cite{Oka}). Even for $\PP^2$ the property is questionable. 

On one hand, if the Jordan--H\"older property could be justified for derived categories
of smooth projective varieties, this would allow to define nice birational invariants,
see~\cite{KECM} and~\cite{BBS}.
On the other hand, it was known for a long time that the property is not satisfied for 
arbitrary triangulated categories. A very simple counterexample was constructed by Alexei Bondal
long ago. Namely, Bondal considered a quiver with relations
\begin{equation}\label{bq}
Q = \left(\left. \xymatrix@C=3em{ 
\bullet \ar@<1ex>[r]^{\alpha_1} \ar@<-1ex>[r]_{\alpha_2} & 
\bullet \ar@<1ex>[r]^{\beta_1} \ar@<-1ex>[r]_{\beta_2} & 
\bullet 
} \right|\
\beta_1\alpha_2 = \beta_2\alpha_1 = 0
\right)
\end{equation}
and noted that on one hand as any oriented quiver it has a full exceptional collection
\begin{equation*}
\BD(Q) = \langle P_1,P_2,P_3 \rangle
\end{equation*}
with $P_i$ being the projective module of the $i$-th vertex, and on the other hand,
it has an exceptional object
\begin{equation}\label{oe}
P = \left( \xymatrix@C=3em{ 
\kk \ar@<1ex>[r]^{1} \ar@<-1ex>[r]_{0} & 
\kk \ar@<1ex>[r]^{1} \ar@<-1ex>[r]_{0} & 
\kk
} \right)
\end{equation}
which is {\sf nonextendable}, i.e.\ does not extend to a longer exceptional collection (even numerically).

Since the category $\BD(Q)$ itself is not equivalent to the derived category of a scheme, Bondal's
counterexample does not answer the question whether the Jordan--H\"older property is true for derived 
categories of schemes, so for some time there was a little hope that by some miracle it might be true.
A recent paper of B\"ohning, Graf von Bothmer and Sosna~\cite{BBS} gave finally a negative answer
to this question. To be more precise, investigating the derived category $\BD(X)$ of the classical 
Godeaux surface 
\begin{equation*}
X = \{ x_1^5 + x_2^5 + x_3^5 + x_4^5 = 0 \}/\ZZ_5 \subset \PP^3/\ZZ_5,
\end{equation*}
where $\ZZ_5$ acts with weight $i$ on $x_i$, the authors constructed two nonextendable exceptional
collections in~$\BD(X)$, one of length 11 (the maximal possible), and the other of length 9.
The nonextendability is also checked on the numerical level. The construction of this
counterexample and the proofs are rather complicated up to such extent that at some moment
a computer computation (using Macaulay2) is used. 

The goal of this note is to give an elementary example. We note that although $\BD(Q)$ itself is not
equivalent to the derived category of an algebraic variety, it can be realized as a semiorthogonal
component of such. And this is enough to get a counterexample. The variety we consider is a two-step
blowup of $\PP^3$ in two smooth rational curves. Its derived category has a full exceptional collection
and we observe that it contains Bondal's quiver as a subcollection.

%
%
%

\section{Example}

Let $A \subset V$ be a pair of vector spaces of dimensions 2 and 4 respectively, so that $\PP(A) \subset \PP(V)$
is a line $\PP^1$ in a $\PP^3$. Let $X$ be the blowup of $\PP(V)$ along $\PP(A)$ with $E$ being the exceptional divisor.
Then
\begin{equation*}
E \cong \PP(A)\times\PP(V/A) = \PP^1\times\PP^1.
\end{equation*}
We denote by $i$ the embedding of $E$ into $X$ and by $H$ the pullback to $X$ of a hyperplane on $\PP(V)$.
The Picard group of $X$ is generated by $H$ and $E$ and we have
\begin{equation}\label{oxhe}
\CO_X(H)_{E} \cong \CO_E(1,0).
\end{equation}
%
%
%
%

Let $C$ be a smooth rational curve on $X$ which intersects $E$ transversally in two points
\begin{equation*}
P_1 = (a_1,b_1),\qquad
P_2 = (a_2,b_2),
\end{equation*}
where $a_i \in \PP(A)$, $b_i \in \PP(V/A)$ and with
\begin{equation*}
a_1\ne a_2.
\end{equation*}
For example, one can take $C$ to be the proper preimage of a conic in $\PP(V)$ intersecting the line $\PP(A)$ in two distinct points
(in this case the points $b_1$ and $b_2$ will coincide, but this does not matter).

Let $\pi:Y \to X$ be the blowup of $X$ in $C$. Let $E'$ be the exceptional divisor of this blowup,
$i':E' \to Y$ be its embedding into $Y$, $p:E' \to C$ the projection, and $j:C \to X$ the embedding of the curve.
This can be summarized in a diagram
\begin{equation*}
\xymatrix{
E' \ar[r]^{i'} \ar[d]_p & 
Y \ar[d]^\pi \\
C \ar[r]^{j} &
X
}
\end{equation*}

Recall that by Orlov's blowup formula~\cite{Or} we have a semiorthogonal decomposition
\begin{equation*}
\BD(Y) = \langle \pi^*(\BD(X)), i'_*p^*(\BD(C)) \rangle.
\end{equation*}
We take the following triple of sheaves on $Y$:
\begin{equation*}
\CE_1 = \CO_Y = \pi^*\CO_X,\qquad
\CE_2 = \pi^*i_*\CO_E(1,0),\qquad
\CE_3 = i'_*p^*\CO_C(-3).
\end{equation*}

\begin{lemma}
The triple $(\CE_1,\CE_2,\CE_3)$ is exceptional and extends to a full exceptional collection in $\BD(Y)$.
\end{lemma}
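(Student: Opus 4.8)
The plan is to verify exceptionality of $(\CE_1,\CE_2,\CE_3)$ by a direct cohomological computation, and then to produce the required full collection by combining Orlov's blow-up formula with a single mutation in a standard full exceptional collection on $X$.

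For exceptionality, recall from Orlov's theorem that $\pi^*\colon\BD(X)\to\BD(Y)$ and $i'_*p^*\colon\BD(C)\to\BD(Y)$ are fully faithful and that $\BD(Y)=\langle\pi^*\BD(X),i'_*p^*\BD(C)\rangle$. Thus $\RHom_Y(\CE_3,\CE_1)=\RHom_Y(\CE_3,\CE_2)=0$ by semiorthogonality; $\RHom_Y(\CE_1,\CE_1)=H^\bullet(Y,\CO_Y)=\kk$ since $Y$ is a blow-up of $\PP^3$; and $\RHom_Y(\CE_3,\CE_3)=\RHom_C(\CO_C(-3),\CO_C(-3))=H^\bullet(\PP^1,\CO)=\kk$. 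For $\CE_2$ I would use that $E$ is a divisor, so that $Li^*i_*\CF\cong\CF\oplus(\CF\otimes N_{E/X}^\vee)[1]$; this reduces $\RHom_X(i_*\CO_E(1,0),i_*\CO_E(1,0))$ to $H^\bullet(E,\CO_E)\oplus H^\bullet(E,N_{E/X})[-1]$, and $\RHom_X(i_*\CO_E(1,0),\CO_X)$ to $\RHom_E(\CO_E(1,0),i^!\CO_X)=H^\bullet(E,\CO_E(0,-1))[-1]$. By adjunction from $\omega_X=\CO_X(-4H+E)$, $\omega_E=\CO_E(-2,-2)$ and \eqref{oxhe} one gets $N_{E/X}=\CO_X(E)|_E=\CO_E(1,-1)$, so both $H^\bullet(E,N_{E/X})=H^\bullet(\PP^1,\CO(1))\otimes H^\bullet(\PP^1,\CO(-1))$ and $H^\bullet(E,\CO_E(0,-1))$ vanish by Künneth; hence $\RHom_Y(\CE_2,\CE_2)=\kk$ and $\RHom_Y(\CE_2,\CE_1)=0$, and $(\CE_1,\CE_2,\CE_3)$ is exceptional.

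For the extension, the geometric input is that the linear blow-up $X$ is a projective bundle: being $\mathrm{Bl}_{\PP(A)}\PP(V)$, it is $X\cong\PP_{\PP^1}(\CO_{\PP^1}^{\oplus2}\oplus\CO_{\PP^1}(-1))$, with projection $\rho\colon X\to\PP^1$ the morphism defined by the pencil $|H-E|$, $\rho^*\CO_{\PP^1}(1)=\CO_X(H-E)$, and relative hyperplane class $\CO_X(H)$. Orlov's projective bundle theorem then yields the full exceptional collection
\[
\BD(X)=\langle\,\CO_X,\ \CO_X(H-E),\ \CO_X(H),\ \CO_X(2H-E),\ \CO_X(2H),\ \CO_X(3H-E)\,\rangle.
\]
Since $\RHom_X(\CO_X(H-E),\CO_X(H))=H^\bullet(X,\CO_X(E))=\kk$ (from $0\to\CO_X\to\CO_X(E)\to i_*\CO_E(1,-1)\to0$ and the vanishing above), the left mutation of $\CO_X(H)$ through $\CO_X(H-E)$ is the cone of the unique morphism $\CO_X(H-E)\to\CO_X(H)$, i.e.\ multiplication by the section of $\CO_X(E)$ cutting out $E$; this morphism is injective with cokernel $i_*\CO_E(1,0)$, so the mutation gives a full exceptional collection of $\BD(X)$ whose first two terms are $\CO_X$ and $i_*\CO_E(1,0)$. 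Applying $\pi^*$ and appending the second block $\langle i'_*p^*\CO_C(-3),\,i'_*p^*\CO_C(-2)\rangle=\langle\CE_3,\,i'_*p^*\CO_C(-2)\rangle$ of Orlov's decomposition for $Y$ produces a full exceptional collection of $\BD(Y)$ in which $\CE_1,\CE_2$ are the first two terms and $\CE_3$ occurs; right-mutating the four intervening $\pi^*$-objects past $\CE_3$ (which leaves $\CE_3$ unchanged) even puts $(\CE_1,\CE_2,\CE_3)$ into the first three slots.

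The exceptionality computations are routine once $N_{E/X}=\CO_E(1,-1)$ is pinned down, so the substantive step is the production of a full exceptional collection of $\BD(X)$ in which $i_*\CO_E(1,0)$ occupies the second slot — precisely where the projective-bundle picture of $X$ together with one mutation is used. The only subtlety there is to be sure the left mutation yields the sheaf $i_*\CO_E(1,0)$ on the nose, and not merely a shift of it; this is guaranteed by $h^0(X,\CO_X(E))=1$ and the fact that the canonical section of $\CO_X(E)$ vanishes exactly along $E$.
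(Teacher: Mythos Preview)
Your argument is correct, but it takes a different route from the paper.  The paper does not verify exceptionality separately at all: it simply observes that $X$ is itself the blowup $\mathrm{Bl}_{\PP(A)}\PP(V)$, so Orlov's blowup formula applied to $\PP^3$ and the center $\PP^1=\PP(A)$ immediately gives a full exceptional collection
\[
\BD(X)=\langle \CO_X(-3H),\ \CO_X(-2H),\ \CO_X(-H),\ \CO_X,\ i_*\CO_E,\ i_*\CO_E(1,0)\rangle
\]
in which both $\CO_X$ and $i_*\CO_E(1,0)$ already sit (in positions $4$ and $6$), with no mutation needed.  Pulling back and appending $i'_*p^*\CO_C(-3),\,i'_*p^*\CO_C(-2)$ then exhibits $\CE_1,\CE_2,\CE_3$ as a subsequence of a full exceptional collection on $Y$; exceptionality of the triple follows for free.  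Your approach trades this for the projective-bundle description $X\to\PP^1=\PP(V/A)$, which is geometrically equally natural but forces you to perform a left mutation to manufacture $i_*\CO_E(1,0)$, and your direct exceptionality check, while perfectly fine, is redundant once the full collection is produced.  The paper's route is shorter; yours has the minor advantage of placing $\CE_1,\CE_2$ in adjacent slots from the outset.  One small remark: the identity $Li^*i_*\CF\cong \CF\oplus(\CF\otimes N_{E/X}^\vee)[1]$ is in general only a distinguished triangle rather than a splitting, but since one of the two cohomology contributions vanishes this does not affect your computation.
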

\begin{proof}
First, we extend the pair $(\CO_X,i_*\CO_E(1,0))$ to a full exceptional collection in $\BD(X)$:
\begin{equation*}
\BD(X) = \langle \CO_X(-3H),\CO_X(-2H),\CO_X(-H),\CO_X,i_*\CO_E,i_*\CO_E(1,0) \rangle.
\end{equation*}
This is just the full exceptional collection obtained by combining the standard exceptional collection 
$(\CO(-3),\CO(-2),\CO(-1),\CO)$ on $\PP(V)$ with the standard collection $(\CO,\CO(1))$ on $\PP(A)$
if we consider $X$ as the blowup of $\PP(V)$ in $\PP(A)$. Pulling it back to $Y$ and combining 
with the exceptional collection $\CO_C(-3),\CO_C(-2)$ on $C$ we obtain a full exceptional collection
in $\BD(Y)$:
\begin{equation*}
\BD(Y) = \langle \CO_Y(-3H),\CO_Y(-2H),\CO_Y(-H),
\underline{\CO_Y},\pi^*i_*\CO_E,
\underline{\pi^*i_*\CO_E(1,0)},\underline{i'_*p^*\CO_C(-3)},i'_*\CO_C(-2) \rangle
\end{equation*}
(we denote here the pullback of $H$ to $Y$ also by $H$). The underlined terms of this exceptional
collection are the objects $\CE_1$, $\CE_2$ and $\CE_3$.
\end{proof}


\begin{lemma}
We have
\begin{equation*}
\Ext^\bullet(\CE_1,\CE_2) = A^*,
\qquad
\Ext^\bullet(\CE_2,\CE_3) = \kk^2[-1],
\qquad
\Ext^\bullet(\CE_1,\CE_3) = \kk^2[-1].
\end{equation*}
\end{lemma}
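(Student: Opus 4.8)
The plan is to compute each of the three Ext-spaces separately, exploiting the adjunctions coming from the two blowups and the known structure sheaf cohomology on projective spaces and on $\PP^1\times\PP^1$.

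First I would handle $\Ext^\bullet(\CE_1,\CE_2)$. Since $\CE_1 = \pi^*\CO_X$ and $\CE_2 = \pi^*i_*\CO_E(1,0)$, and since $\pi^*$ is fully faithful on $\BD(X)$ (being part of Orlov's semiorthogonal decomposition, or simply because $\pi_*\pi^* = \id$), one has $\Ext^\bullet_Y(\CE_1,\CE_2) = \Ext^\bullet_X(\CO_X, i_*\CO_E(1,0)) = \Ext^\bullet_E(\CO_E,\CO_E(1,0)) = H^\bullet(E,\CO_E(1,0))$, where the middle equality uses $i^*\CO_X = \CO_E$. By the Künneth formula on $E = \PP^1\times\PP^1$, $H^\bullet(\CO_E(1,0)) = H^\bullet(\PP^1,\CO(1)) \otimes H^\bullet(\PP^1,\CO) = A^* \otimes \kk$, concentrated in degree zero; identifying $H^0(\PP(A),\CO(1)) = A^*$ gives the first formula.

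Next I would compute $\Ext^\bullet(\CE_2,\CE_3)$. Here $\CE_3 = i'_*p^*\CO_C(-3)$ lies in the second component $i'_*p^*\BD(C)$ of the decomposition of $\BD(Y)$, so I would use that for $G \in \BD(X)$ and $F \in \BD(C)$ one has $\Ext^\bullet_Y(\pi^*G, i'_*p^*F) = \Ext^\bullet_C(Lj^*G, F)$ (adjunction $i'_*p^* \dashv$ the composite $p_* L{i'}^*$, together with $L{i'}^*\pi^* = p^* Lj^*$, which follows because $E'$ is the fibre product only after derived pullback — more precisely one uses the standard identity from Orlov's setup). Thus $\Ext^\bullet(\CE_2,\CE_3) = \Ext^\bullet_C(Lj^* i_*\CO_E(1,0), \CO_C(-3))$. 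Now $C$ meets $E$ transversally in the two points $P_1,P_2$, so $Lj^* i_*\CO_E(1,0)$ is the structure sheaf $\CO_{P_1\cup P_2}$ of those two reduced points (the line bundle twist is trivialized on points, and transversality kills the higher Tor's); hence $\Ext^\bullet_C(\CO_{P_1\cup P_2}, \CO_C(-3))$. For a single reduced point $P$ on a smooth curve $C$, a Koszul/local computation gives $\Ext^\bullet_C(\CO_P, \CO_C(-3)) = \kk[-1]$ (the point has a length-two locally free resolution $0\to\CO_C(-P)\to\CO_C\to\CO_P\to 0$, and dualizing into $\CO_C(-3)$ gives cohomology only in degree $1$). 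Summing over the two points yields $\kk^2[-1]$.

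Finally, $\Ext^\bullet(\CE_1,\CE_3)$ is essentially the same computation: $\CE_1 = \pi^*\CO_X$, so $\Ext^\bullet_Y(\CE_1,\CE_3) = \Ext^\bullet_C(Lj^*\CO_X, \CO_C(-3)) = \Ext^\bullet_C(\CO_C, \CO_C(-3)) = H^\bullet(C,\CO_C(-3))$; since $C \cong \PP^1$ this is $H^\bullet(\PP^1,\CO(-3)) = \kk^2[-1]$ by Serre duality. The main obstacle I anticipate is getting the base-change/adjunction identity $L{i'}^*\pi^* \cong p^* Lj^*$ (equivalently the projection-formula form of Orlov's computation) stated correctly, and checking carefully that transversality of $C$ and $E$ makes $Lj^* i_* \CO_E(1,0)$ a genuine sheaf (no derived corrections) — once those are in place the rest is bookkeeping with cohomology of line bundles on $\PP^1$ and $\PP^1\times\PP^1$.
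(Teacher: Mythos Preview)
Your argument is correct. Two small clarifications: the identity $L{i'}^*\,L\pi^* \cong p^*\,Lj^*$ is not a base-change theorem at all, just functoriality of derived pullback along the equality $\pi\circ i' = j\circ p$ together with flatness of the $\PP^1$-bundle $p$; and your parenthetical ``adjunction $i'_*p^* \dashv p_* L{i'}^*$'' is not literally true in general --- what you actually use is $(L{i'}^*,i'_*)$-adjunction followed by full faithfulness of $p^*$, which works precisely because $L{i'}^*\pi^*G$ lands in the image of $p^*$.

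The route differs from the paper's only in the computation of $\Ext^\bullet(\CE_2,\CE_3)$. The paper stays on $X$: it uses the $\pi_*$-adjunction to get $\Ext^\bullet_X(i_*\CO_E(1,0),\,j_*\CO_C(-3))$, then resolves $i_*\CO_E(1,0)$ by the two-term complex $\CO_X(H-E)\to\CO_X(H)$ and computes $\RCHom$ into $j_*\CO_C(-3)$ directly, obtaining $\CO_{P_1}[-1]\oplus\CO_{P_2}[-1]$ via the observation that the section $E$ cuts out $P_1+P_2$ on $C$. You instead pull back to $C$ and use transversality of $C$ and $E$ to identify $Lj^*i_*\CO_E(1,0)$ with $\CO_{P_1}\oplus\CO_{P_2}$, then compute $\Ext^\bullet_C(\CO_{P_i},\CO_C(-3))$ by the Koszul resolution of a point. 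Your way is a touch more direct for the bare Ext computation; the paper's way has the advantage that it produces the decomposition $\Ext^1(\CE_2,\CE_3)\cong H^0(\CO_{P_1})\oplus H^0(\CO_{P_2})$ on $X$, which is exactly the form needed in the subsequent proposition to analyze the multiplication map $\Hom(\CE_1,\CE_2)\otimes\Ext^1(\CE_2,\CE_3)\to\Ext^1(\CE_1,\CE_3)$.
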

Here the brackets stand for the homological shift. In other words, it is claimed that between $\CE_1$ and $\CE_2$ there is only $\Hom$,
while from $\CE_1$ and $\CE_2$ to $\CE_3$ there are only $\Ext^1$.

\begin{proof}
Since $\pi^*$ is fully faithful we have
\begin{equation*}
\Ext^\bullet(\CE_1,\CE_2) = \Ext^\bullet(\CO_X,i_*\CO_E(1,0)) = H^\bullet(E,\CO_E(1,0)) = A^*.
\end{equation*}
Furthermore, for any $F \in \BD(X)$, $G \in \BD(C)$ we have
\begin{equation*}
\Ext^\bullet(\pi^*F,i'_*p^*G) \cong
\Ext^\bullet(F,\pi_*i'_*p^*G) \cong
\Ext^\bullet(F,j_*p_*p^*G) \cong
\Ext^\bullet(F,j_*G).
\end{equation*}
It follows that 
\begin{equation*}
\Ext^\bullet(\CE_1,\CE_3) = 
\Ext^\bullet(\CO_X,j_*\CO_C(-3)) = 
H^\bullet(C,\CO_C(-3)) = \kk^2[-1].
\end{equation*}
Finally, 
\begin{equation*}
\Ext^\bullet(\CE_2,\CE_3) = 
\Ext^\bullet(i_*\CO_E(1,0),j_*\CO_C(-3)).
\end{equation*}
To compute the latter we take the resolution
\begin{equation*}
0 \to \CO_X(H-E) \xrightarrow{\ E\ } \CO_X(H) \to i_*\CO_E(1,0) \to 0
\end{equation*}
and apply the local $\CHom(-,j_*\CO_C(-3))$ functor. We deduce that $\RCHom(i_*\CO_E(1,0),j_*\CO_C(-3))$
is quasiisomorphic to the complex
\begin{equation*}
j_*\CO_C(-H-3p) \xrightarrow{\ E\ } j_*\CO_C(E-H-3p), 
\end{equation*}
where $p$ stands for the class of a point on $C$, with terms in grading $0$ and $1$ respectively.
Since $C$ is a smooth curve and $E$ is a section of a line bundle on $C$
vanishing with multiplicity 1 at points $P_1$ and $P_2$ only, we see that
the above map has trivial kernel and its cokernel is just the sum of $\CO_{P_1}$ and $\CO_{P_2}$,
the structure sheaves of the points. It follows that
\begin{equation*}
\CExt^\bullet(i_*\CO_E(1,0),j_*\CO_C(-3)) \cong \CO_{P_1}[-1] \oplus \CO_{P_2}[-1].
\end{equation*}
Using the local-to-global spectral sequence we deduce that
\begin{equation}\label{e23}
\Ext^\bullet(i_*\CO_E(1,0),j_*\CO_C(-3)) = H^\bullet(Y,\CO_{P_1})[-1] \oplus H^\bullet(Y,\CO_{P_2})[-1].
\end{equation}
which gives the last claim of the Lemma.
\end{proof}

It remains to compute the multiplication map. Let $\alpha_1,\alpha_2$ be the basis of $A^*$ dual to the basis
$a_1,a_2$ of $A$ given by the first coordinates of the points $P_1$ and $P_2$. Let $\beta_1,\beta_2$
be the basis of $\Ext^1(\CE_2,\CE_3)$ given by the spaces $H^0(Y,\CO_{P_1})$ and $H^0(Y,\CO_{P_2})$ in~\eqref{e23} respectively.

\begin{proposition}
The multiplication map
\begin{equation*}
m:\Hom(\CE_1,\CE_2) \otimes \Ext^1(\CE_2,\CE_3) \to \Ext^1(E_1,E_3)
\end{equation*}
is surjective and its kernel is spanned by $\alpha_1\otimes\beta_2$ and $\alpha_2\otimes\beta_1$.
\end{proposition}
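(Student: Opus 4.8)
The plan is to transfer the whole computation from $Y$ down to $X$ and there express it through line bundles on the curve $C$. First I would observe that the identifications used in the proof of the preceding Lemma respect compositions: since $\pi^*$ is fully faithful and $\pi_*i'_*p^*\cong j_*$, the isomorphism $\Ext^\bullet(\pi^*F,i'_*p^*G)\cong\Ext^\bullet(F,j_*G)$ sends a morphism $\phi$ to $\varepsilon\circ\pi^*\phi$, where $\varepsilon$ is the counit of $(\pi^*,\pi_*)$, and it is therefore functorial. Hence, under the identifications of $\Hom(\CE_1,\CE_2)$ with $\Hom_X(\CO_X,i_*\CO_E(1,0))$, of $\Ext^1(\CE_2,\CE_3)$ with $\Ext^1_X(i_*\CO_E(1,0),j_*\CO_C(-3))$, and of $\Ext^1(\CE_1,\CE_3)$ with $\Ext^1_X(\CO_X,j_*\CO_C(-3))=H^1(C,\CO_C(-3))$, the map $m$ becomes the composition of morphisms in $\BD(X)$. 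Moreover, using the exact sequence $0\to\CO_X(H-E)\xrightarrow{E}\CO_X(H)\xrightarrow{q}i_*\CO_E(1,0)\to0$ together with the surjectivity of the restriction $H^0(X,\CO_X(H))=V^*\to H^0(E,\CO_E(1,0))=A^*$, I can write any $\alpha\in A^*$ as $q\circ\ell$ for a linear form $\ell\in V^*$ restricting to $\alpha$ on $A$. Thus $m(\alpha\otimes\beta)=(\beta\circ q)\circ\ell$, and it suffices to handle precomposition with $q$ and with $\ell$ separately.

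For precomposition with $q$, I would apply $\RHom(-,j_*\CO_C(-3))$ to the exact sequence above; repeating the local computation of the preceding Lemma gives $\RCHom(i_*\CO_E(1,0),j_*\CO_C(-3))\cong j_*Q[-1]$, where $Q=\mathrm{coker}(\CL_0\xrightarrow{E}\CL_1)$ with $\CL_0=\CO_C(-H-3p)$ and $\CL_1=\CO_C(E-H-3p)=\CL_0(P_1+P_2)$. Since $\CL_0=\CL_1(-P_1-P_2)$, we have $Q=\CL_1|_{P_1}\oplus\CL_1|_{P_2}$, so $\Ext^1(i_*\CO_E(1,0),j_*\CO_C(-3))=H^0(C,Q)$ and $\beta_i$ is a nonzero element of the line $\CL_1|_{P_i}$. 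The triangle obtained from the resolution then identifies $-\circ q\colon\Ext^1(i_*\CO_E(1,0),j_*\CO_C(-3))=H^0(C,Q)\to\Ext^1(\CO_X(H),j_*\CO_C(-3))=H^1(C,\CL_0)$, up to sign, with the connecting homomorphism $\delta$ of $0\to\CL_0\to\CL_1\to Q\to0$.

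Precomposition with $\ell\colon\CO_X\to\CO_X(H)$, on the other hand, is multiplication by $\ell|_C\in H^0(C,\CO_C(H))$, i.e.\ the map $H^1(C,\CL_0)\to H^1(C,\CL_0(H))=H^1(C,\CO_C(-3))$. Tensoring the sequence $0\to\CL_0\to\CL_1\to Q\to0$ by $\CO_C(H)$ and using naturality of connecting maps, I obtain
\begin{equation*}
m(\alpha\otimes\beta)=\ell|_C\cdot\delta(\beta)=\delta'(\ell|_C\cdot\beta)\in H^1(C,\CO_C(-3))=\Ext^1(\CE_1,\CE_3),
\end{equation*}
where $\delta'\colon H^0(C,Q(H))\to H^1(C,\CO_C(-3))$ is the connecting map of the twisted sequence. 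Since $\CL_1(H)=\CO_C(P_1+P_2-3p)$ has negative degree, $H^0(C,\CL_1(H))=0$, so $\delta'$ is injective, hence an isomorphism of two-dimensional spaces. Now $Q(H)=\CL_1(H)|_{P_1}\oplus\CL_1(H)|_{P_2}$ is supported at $P_1,P_2$, so $\ell|_C\cdot\beta_i$ is a generator of $\CL_1(H)|_{P_i}$ scaled by the value of $\ell|_C$ at $P_i$; and since $P_i$ lies on $E$ and $\CO_X(H)_E\cong\CO_E(1,0)$ by~\eqref{oxhe}, this value is, up to a nonzero scalar, $\alpha(a_i)$, the value at $P_i=(a_i,b_i)$ of the section of $\CO_E(1,0)$ determined by $\alpha\in A^*$. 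Since $\alpha_j(a_i)=\delta_{ij}$, it follows that $\ell_j|_C\cdot\beta_i=0$ for $i\neq j$ and is a generator of $\CL_1(H)|_{P_i}$ for $i=j$. Applying the injective map $\delta'$, we conclude that $m(\alpha_j\otimes\beta_i)=0$ for $i\neq j$, while $m(\alpha_1\otimes\beta_1)$ and $m(\alpha_2\otimes\beta_2)$, being the $\delta'$-images of generators of the two distinct summands of $H^0(C,Q(H))$, are linearly independent. Hence $m$ is surjective, $\dim\Ker m=4-2=2$, and since $\alpha_1\otimes\beta_2$ and $\alpha_2\otimes\beta_1$ are linearly independent elements of $\Ker m$, they span it.

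The step I expect to be the main obstacle is the identification of precomposition with $q$ with the connecting homomorphism $\delta$: one must follow, through the two-term resolution of $i_*\CO_E(1,0)$, how the class $\beta_i$ interacts with the morphism $q$. The rest amounts to twisting by $\CO_C(H)$ and degree counts on $\PP^1$.
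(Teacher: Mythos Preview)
Your proof is correct and rests on the same underlying mechanism as the paper's: after reducing to $X$ via $\pi^*$-adjunction, both arguments boil down to the vanishing $\alpha_j(a_i)=0$ for $i\neq j$ and to the acyclicity of $\CO_C(-1)$, and your twisted sequence $0\to\CL_0(H)\to\CL_1(H)\to Q(H)\to 0$ is precisely the paper's sequence~\eqref{cext}. The packaging, however, is different in an instructive way. The paper factors each $\beta_i$ through the skyscraper $\CO_{P_i}$, writing $\beta_i=j_*(\eta_i)\circ\rho_i$ with $\rho_i:i_*\CO_E(1,0)\to\CO_{P_i}$ the restriction and $\eta_i:\CO_{P_i}\to\CO_C(-3)[1]$ the canonical extension class; then $m(\alpha_j\otimes\beta_i)=j_*(\eta_i)\circ(\rho_i\circ\alpha_j)$, and $\rho_i\circ\alpha_j$ is visibly the evaluation of the section $\alpha_j$ at $P_i$. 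You instead factor $\alpha_j=q\circ\ell_j$ through $\CO_X(H)$ and identify $-\circ q$ with the connecting map $\delta$ of the cokernel sequence on $C$, then use naturality of $\delta$ under the twist by $\CO_C(H)$. Your route is more systematic and makes the role of the resolution of $i_*\CO_E(1,0)$ explicit, at the cost of the verification you flag (that precomposition with $q$ really is $\delta$ up to sign, which follows by chasing the distinguished triangle obtained from the resolution). The paper's route is more hands-on and sidesteps that identification, but requires first justifying the factorization $\beta_i=j_*(\eta_i)\circ\rho_i$ via the same sequence~\eqref{cext}. Either way, the linear independence of $m(\alpha_i\otimes\beta_i)$ comes out of the injectivity of the connecting map $H^0(\CO_{P_1}\oplus\CO_{P_2})\to H^1(\CO_C(-3))$, which is your $\delta'$.
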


Before giving a proof let us mention the consequences.

\begin{corollary}
The algebra of homomorphisms of the exceptional collection $\CE_1,\CE_2[1],\CE_3[1]$ is isomorphic
to the path algebra of Bondal's quiver~\eqref{bq}.
\end{corollary}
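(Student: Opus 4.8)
The plan is to compute the Yoneda product $m$ by hand, transporting the question first from $Y$ to $X$ and then to the rational curve $C\cong\PP^1$. Since $\pi^*$ is fully faithful, $\Hom(\CE_1,\CE_2)=\Hom(\CO_X,i_*\CO_E(1,0))$; and the isomorphism $\Ext^\bullet(\pi^*F,i'_*p^*G)\cong\Ext^\bullet(F,j_*G)$ established in the proof of the preceding lemma is natural in $F$, hence compatible with composition by morphisms pulled back from $X$. So under the identifications $\Ext^1(\CE_2,\CE_3)=\Ext^1_X(i_*\CO_E(1,0),j_*\CO_C(-3))$ and $\Ext^1(\CE_1,\CE_3)=\Ext^1_X(\CO_X,j_*\CO_C(-3))=H^1(C,\CO_C(-3))$, the map $m$ becomes the composition map $\bar m$ on $X$, and it suffices to analyse $\bar m$.

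Next I would split off $\alpha$. A class $\alpha\in H^0(E,\CO_E(1,0))=A^*$ lifts to a linear form $\ell\in V^*=H^0(X,\CO_X(H))$ with $\ell|_A=\alpha$, because $V^*\to A^*$ is onto and $H^1(X,\CO_X(H-E))=H^1(\PP(V),\mathcal I_{\PP(A)}(1))=0$. Then $\alpha=r\circ\ell$, where $r$ is the restriction map in the resolution $0\to\CO_X(H-E)\xrightarrow{\ E\ }\CO_X(H)\xrightarrow{\ r\ }i_*\CO_E(1,0)\to0$ from the proof of the preceding lemma, so
\[
\bar m(\alpha\otimes\beta)=\ell^*(r^*\beta),\qquad r^*\beta\in\Ext^1_X(\CO_X(H),j_*\CO_C(-3))=H^1(C,\CO_C(-H-3p)),
\]
with $\ell^*$ equal to multiplication by the section $\ell|_C\in H^0(C,\CO_C(H))$.

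Then I would compute $r^*\beta_k$ and multiply it by $\ell|_C$. Applying $\RCHom(-,j_*\CO_C(-3))$ to the resolution above identifies $r^*$ with the boundary map $\partial$ in the long exact cohomology sequence of the short exact sequence on $C$ from the proof of the preceding lemma,
\[
0\to\CO_C(-H-3p)\xrightarrow{\ E\ }\CO_C(E-H-3p)\to\CO_{P_1}\oplus\CO_{P_2}\to0,
\]
$\beta_k$ being the generator of $H^0$ of the $P_k$-summand on the right. Twisting this sequence by $\CO_C(H)$ and multiplying it by $\ell|_C$ produces a morphism of short exact sequences onto
\[
0\to\CO_C(-3p)\xrightarrow{\ E\ }\CO_C(E-3p)\to\CO_{P_1}\oplus\CO_{P_2}\to0,
\]
which on the $P_k$-summand on the right is multiplication by the scalar $\ell|_C(P_k)=\ell(a_k)=\alpha(a_k)$. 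As $\CO_C(E-3p)$ has degree $-1$ and hence no cohomology, the boundary map $\partial'$ of this second sequence is an isomorphism $H^0(C,\CO_{P_1}\oplus\CO_{P_2})\cong H^1(C,\CO_C(-3))$; writing $\gamma_k$ for the image of the generator of the $P_k$-summand, the pair $\gamma_1,\gamma_2$ is a basis of $H^1(C,\CO_C(-3))=\Ext^1(\CE_1,\CE_3)$, and naturality of boundary maps gives
\[
\bar m(\alpha\otimes\beta_k)=\ell^*(\partial\beta_k)=\partial'(\ell|_C\cdot\beta_k)=\alpha(a_k)\,\gamma_k.
\]
Since $\alpha_i(a_j)=\delta_{ij}$ this yields $m(\alpha_1\otimes\beta_1)=\gamma_1$, $m(\alpha_2\otimes\beta_2)=\gamma_2$, $m(\alpha_1\otimes\beta_2)=m(\alpha_2\otimes\beta_1)=0$; the first two show $m$ surjective, so $\ker m$ is two-dimensional and hence equals $\langle\alpha_1\otimes\beta_2,\alpha_2\otimes\beta_1\rangle$.

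The step I expect to cost the most care is the identification of $r^*$ with the boundary map $\partial$: one must reconcile the description of $\Ext^1_X(i_*\CO_E(1,0),j_*\CO_C(-3))$ coming from the two-term complex $\RCHom(i_*\CO_E(1,0),j_*\CO_C(-3))$ with its description as $H^0(C,\CO_{P_1}\oplus\CO_{P_2})$, and verify that under these identifications $r^*$ really is the connecting homomorphism. Everything afterwards — functoriality of $\partial$ and the vanishing $H^\bullet(C,\CO_C(E-3p))=0$ — is routine.
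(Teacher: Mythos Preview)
Your argument is correct and establishes the Proposition, from which the Corollary is immediate (the paper gives no separate proof of the Corollary either). The overall shape matches the paper's proof: reduce from $Y$ to $X$ via $\pi$-adjunction, exploit the resolution $0\to\CO_X(H-E)\to\CO_X(H)\to i_*\CO_E(1,0)\to 0$, and ultimately use that $\alpha_i$ vanishes at $a_j$ exactly when $i\ne j$.

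The organizational difference is which factor gets unwound. The paper factors the $\beta$'s rather than the $\alpha$'s: it writes $\beta_i=j_*(\eta_i)\circ\rho_i$ with $\rho_i:i_*\CO_E(1,0)\to\CO_{P_i}$ the restriction and $\eta_i:\CO_{P_i}\to\CO_C(-3)[1]$ the canonical extension, then observes directly that $\rho_i\circ\alpha_j$ is the evaluation map $e_i:\CO_X\to\CO_{P_i}$ for $i=j$ and zero for $i\ne j$; linear independence of the $j_*(\eta_i)\circ e_i$ is read off from the long exact sequence of $\Hom(\CO_X,-)$ applied to $0\to\CO_C(-3)\to\CO_C(-1)\to\CO_{P_1}\oplus\CO_{P_2}\to 0$. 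Your route via $\alpha=r\circ\ell$ and naturality of boundary maps yields the closed formula $\bar m(\alpha\otimes\beta_k)=\alpha(a_k)\,\gamma_k$ in one stroke, at the cost of the identification $r^*=\partial$ that you flagged; the paper's route sidesteps that identification but incurs the parallel (and equally mild) verification that $\beta_i=j_*(\eta_i)\circ\rho_i$. Incidentally, your twisted sequence $0\to\CO_C(-3p)\to\CO_C(E-3p)\to\CO_{P_1}\oplus\CO_{P_2}\to 0$ is literally the paper's sequence, since $E|_C=P_1+P_2$, so the two arguments converge at the end.
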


\begin{corollary}
The derived category $\BD(Y)$ of $Y$ does not have the Jordan--H\"older property.
\end{corollary}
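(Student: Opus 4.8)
The plan is to exhibit two semiorthogonal decompositions of $\BD(Y)$ with indecomposable components whose lengths differ; by the definition of the Jordan--H\"older property this is already a contradiction. First I would rearrange the full exceptional collection of the first lemma, using the braid group action recalled in the introduction, so that $\CE_1,\CE_2,\CE_3$ occupy three consecutive slots: it suffices to mutate $\pi^*i_*\CO_E$ to the right, past $\CE_2$ and then past $\CE_3$. Mutations preserve exceptionality and fullness, so we obtain a full exceptional collection
\[
\BD(Y)=\langle G_1,G_2,G_3,\CE_1,\CE_2,\CE_3,G_7,G_8\rangle ,
\]
and hence, writing $\langle G\rangle$ for the (indecomposable, $\cong\BD(\kk)$) subcategory generated by an exceptional object $G$, a semiorthogonal decomposition
\[
\BD(Y)=\langle\,\langle G_1\rangle,\langle G_2\rangle,\langle G_3\rangle,\langle\CE_1\rangle,\langle\CE_2\rangle,\langle\CE_3\rangle,\langle G_7\rangle,\langle G_8\rangle\,\rangle
\]
with eight indecomposable components.

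For the second decomposition I would keep the five outer blocks and replace the middle block $\CT':=\langle\CE_1,\CE_2,\CE_3\rangle$ by a different one. By the preceding corollary $\CT'$ is equivalent to $\BD(Q)$, the derived category of Bondal's quiver (that corollary computes only the $\Hom$-algebra, but formality of the relevant $\RHom$ is automatic --- its cohomology sits in a single degree --- so the equivalence follows). Transport Bondal's exceptional object $P$ of~\eqref{oe} into $\CT'$ along this equivalence; since $P$ is exceptional, $\langle P\rangle\subset\CT'$ is admissible, and with $\CB:={}^{\perp}_{\CT'}\langle P\rangle$ one has $\CT'=\langle\CB,\langle P\rangle\rangle$. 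Substituting this into the block $\CT'$ gives
\[
\BD(Y)=\langle\,\langle G_1\rangle,\langle G_2\rangle,\langle G_3\rangle,\CB,\langle P\rangle,\langle G_7\rangle,\langle G_8\rangle\,\rangle ,
\]
a semiorthogonal decomposition with seven components, six of which are copies of $\BD(\kk)$; so everything comes down to showing that $\CB$ is indecomposable.

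This last point is the heart of the matter, and is exactly Bondal's observation that $P$ is not extendable even numerically. A full exceptional collection makes $K_0(\BD(Q))\cong\ZZ^3$ a unimodular lattice for the Euler form; as $[P]$ has square $1$, the sublattice $K_0(\CB)=[P]^{\perp}\cong\ZZ^2$ is a rank-two direct summand on which the Euler form is again unimodular. The key computation --- which I would do by evaluating $\Hom$, $\Ext^1$ and $\Ext^2$ between the three simple modules of $Q$ --- shows that this restricted form is \emph{skew-symmetric}, hence nondegenerate. If $\CB$ split as $\langle\CL,\CM\rangle$ with $\CL,\CM$ nonzero, then $K_0(\CB)=K_0(\CL)\oplus K_0(\CM)$ with the Euler form block upper-triangular; both summands are nonzero (a factor with vanishing Grothendieck group would be a phantom, which $\BD(Q)$ does not admit), hence both of rank one, and skew-symmetry forces the form to be block diagonal with vanishing $1\times1$ diagonal blocks --- contradicting nondegeneracy. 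Hence $\CB$ is indecomposable, and the two decompositions above have $8$ and $7$ indecomposable components, so $\BD(Y)$ fails the Jordan--H\"older property. The steps I expect to require the most care are the upgrade from the $\Hom$-algebra to a genuine equivalence $\CT'\simeq\BD(Q)$ via formality, and the skew-symmetry computation, which --- elementary as it is --- is what actually makes the example work.
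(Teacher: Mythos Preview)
The paper states this corollary without proof, leaving it as an immediate consequence of the preceding corollary together with Bondal's observation (recalled in the introduction) that the object $P$ is numerically nonextendable in $\BD(Q)$. Your argument is precisely the natural fleshing-out of that implicit proof: the mutation making $\CE_1,\CE_2,\CE_3$ consecutive, the formality upgrade from the $\Hom$-algebra computation to a genuine equivalence $\CT'\simeq\BD(Q)$, and the verification that the Euler form on $K_0(\CB)\cong\ZZ^2$ is skew-symmetric and unimodular are exactly the right steps and are correctly carried out.

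The one point worth flagging is the passage from ``$\CB$ contains no exceptional object'' (which is what skew-symmetry of $\chi$ gives, since $\chi(v,v)=0$ for all $v\in K_0(\CB)$) to ``$\CB$ is indecomposable''. Your argument for this relies on the claim that $\BD(Q)$ admits no phantom subcategories, which you assert but do not justify. This is indeed the subtle point --- with a phantom one could have $\CB=\langle\CL,\CM\rangle$ with $K_0(\CL)=0$ and the $K$-theory argument would not bite. The paper is equally silent here (it simply invokes Bondal), so your proof is certainly no less complete than the intended one; but you correctly sense that something in this neighbourhood ``requires the most care'', and it is this step rather than the formality or the skew-symmetry computation.
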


\begin{proof}[Proof of the Proposition]
First, the pullback-pushforward adjunction for the morphism $\pi$ shows that the map $m$ coincides
with the multiplication map
%
\begin{equation*}
\Hom(\CO_X,i_*\CO_E(1,0)) \otimes \Ext^1(i_*\CO_E(1,0),j_*\CO_C(-3)) \to \Ext^1(\CO_X,j_*\CO_C(-3)).
\end{equation*}
Now let us identify the bases in the spaces we are interested in.

The first space $\Hom(\CO_X,i_*\CO_E(1,0))$ has $\alpha_1,\alpha_2$ as a base. By~\eqref{oxhe} 
we can find sections $\bar\alpha_1,\bar\alpha_2$ of $\CO_X(H)$ which restrict to the sections $\alpha_1$ and $\alpha_2$ of $\CO_E(1,0)$. 
These are given just by two planes in $\PP^3 = \PP(V)$ intersecting the line $\PP^1=\PP(A)$ transversally in points 
$a_2$ and $a_1$ respectively. 

Further, denote by $\rho_i$ the canonical morphisms
\begin{equation*}
\CO_E(1,0) \xrightarrow{\ \rho_i\ } \CO_{P_i}
\end{equation*}
and by $\eta_i$ the canonical extensions
\begin{equation*}
\CO_{P_i} \xrightarrow{\ \eta_i\ } \CO_C(-3)[1].
\end{equation*}
Then
\begin{equation*}
\beta_i = j_*(\eta_i) \circ \rho_i.
\end{equation*}
To see this consider (the pushforward to $X$ of) the exact sequence
\begin{equation}\label{cext}
0 \to \CO_C(-3) \xrightarrow{\ P_1 + P_2\ } \CO_C(-1) \to \CO_{P_1} \oplus \CO_{P_2} \to 0
\end{equation}
corresponding to the sum of extension $\eta_1$ and $\eta_2$, and apply the functor $\Hom(i_*\CO_E(1,0),-)$ to it.
We will get an exact sequence
\begin{multline*}
\dots \to 
\Hom(i_*\CO_E(1,0),j_*\CO_C(-1)) \to \\ \to
\Hom(i_*\CO_E(1,0),\CO_{P_1}) \oplus
\Hom(i_*\CO_E(1,0),\CO_{P_2}) \to \\ \to
\Ext^1(i_*\CO_E(1,0),j_*\CO_C(-3)) \to \dots
\end{multline*}
The first term is zero (this is proved analogously to~\eqref{e23}).
Consequently, the second map is an embedding.
Clearly, it takes the basis $(\rho_1,\rho_2)$ of the second term to $j_*(\eta_1) \circ \rho_1$
and $j_*(\eta_2) \circ \rho_2$ respectively, which thus span the second space. Clearly, these
elements coincide with $\beta_1$ and $\beta_2$.

Now we can check that the products of $\alpha_1\otimes\beta_2$ and of $\alpha_2\otimes\beta_1$ are zero.
Indeed,
\begin{equation*}
m(\alpha_1\otimes\beta_2) = 
\beta_2\circ\alpha_1 =
j_*(\eta_2) \circ \rho_2 \circ \alpha_1
\end{equation*}
and already the composition
\begin{equation*}
\rho_2\circ\alpha_1 : \CO_X \xrightarrow{\ \alpha_1\ } \CO_E(1,0) \xrightarrow{\ \rho_2\ } \CO_{P_2}
\end{equation*}
is zero since $\alpha_1$ vanishes at point $P_2$. The same argument applies to $\alpha_2\otimes\beta_1$.

So, to finish the proof of the Proposition it remains to check that the products of $\alpha_1\otimes\beta_1$ 
and $\alpha_2\otimes\beta_2$ are linearly independent in the space $\Ext^1(\CO_X,j_*\CO_C(-3))$. For this we note that 
\begin{equation*}
m(\alpha_i\otimes\beta_i) = 
\beta_i\circ\alpha_i =
j_*(\eta_i) \circ \rho_i \circ \alpha_i
\end{equation*}
and that the composition
\begin{equation*}
\rho_i\circ\alpha_i : \CO_X \xrightarrow{\ \alpha_i\ } \CO_E(1,0) \xrightarrow{\ \rho_i\ } \CO_{P_i}
\end{equation*}
is equal to the canonical evaluation map $e_i:\CO_X \to \CO_{P_i}$. Finally, applying the functor $\Hom(\CO_X,-)$ to sequence~\eqref{cext}, we obtain
an exact sequence
\begin{equation*}
\dots \to \Hom(\CO_X,j_*\CO_C(-1)) \to \Hom(\CO_X,\CO_{P_1}) \oplus \Hom(\CO_X,\CO_{P_2}) \to \Ext^1(\CO_X,j_*\CO_C(-3)) \to \dots
\end{equation*}
Again, its first term is zero since the line bundle $\CO_C(-1)$ on the rational curve $C$ is acyclic,
hence the second map is an embedding. This means that the images $j_*(\eta_i)\circ e_i$ of the 
canonical evaluation maps $e_i$ are linearly independent in $\Ext^1(\CO_X,j_*\CO_C(-3))$. This precisely
means that $m(\alpha_i\otimes\beta_i)$ are linearly independent.
\end{proof}

\end{document}